\numberwithin{equation}{section}
\theoremstyle{plain}
\newtheorem{thm}{Theorem}[section]
\newtheorem{proposition}[thm]{Proposition}
\newtheorem{lem}[thm]{Lemma}
\theoremstyle{definition}
\theoremstyle{remark}
\newtheorem*{rmk}{Remark}
\newcommand{\defi}{\operatorname{def}}
\newcommand{\D}{\operatorname{D}}
\newcommand{\M}{\operatorname{M}}
\begin{document}

\title{The Chromatic Number of Joins of Signed Graphs \footnote{This paper originates from a doctoral thesis written under the supervision of Thomas Zaslavsky.}}
\author{Amelia R.W. Mattern}
\affil{Binghamton University, Binghamton, New York, U.S.A.}
\maketitle

\begin{abstract}
We introduce joins of signed graphs and explore the chromatic number of the all-positive and all-negative joins. We prove an analogue to the theorem that the chromatic number of the join of two graphs equals the sum of their chromatic numbers. Given two signed graphs, the chromatic number of the all-positive and all-negative join is usually less than the sum of their chromatic numbers, by an amount that depends on the new concept of deficiency of a signed-graph coloration.
\end{abstract}

\section{Introduction}
A \emph{signed graph} is a graph in which every edge has an associated sign. We write a signed graph $\Sigma$ as the triple $(V, E, \sigma)$ where $V$ is the vertex set, $E$ is the edge set, and $\sigma: E \to \{+,-\}$ is the \emph{signature}. Our graphs are \emph{signed simple graphs}: with no loops and no multiple edges. 

We define signed-graph coloring as in \cite{zaslavsky}, and chromatic number as in \cite{macajova}. A \emph{proper coloration} of a signed graph $\Sigma$ is a function, $\kappa : V \to \{\pm1, \pm 2, \ldots, \allowbreak \pm k, 0\},$ such that for any edge $e_{ab} \in E$, $\kappa(a) \neq \sigma(e) \kappa(b).$ The \emph{chromatic number} of $\Sigma$, written $\chi(\Sigma)$, is the size of the smallest set of colors which can be used to properly color $\Sigma.$ A graph with chromatic number $k$ is called \emph{k-chromatic}. A coloration is \emph{minimal} if it is proper and uses a set of colors of size $\chi(\Sigma).$ If $\chi = 2k$ then a minimal \emph{color set} is $\{\pm1, \pm2, \ldots, \pm k\},$ and if $\chi = 2k+1$ then a minimal color set is $\{\pm1, \pm2, \ldots, \pm k, 0\}.$ If $\chi = 2k+1$, there must be at least one vertex colored 0 in every minimal coloration.

The \emph{deficiency} of a coloration, $\defi(\kappa)$, is the number of unused colors from the color set of $\kappa.$ The \emph{deficiency set}, $\D(\kappa)$, is the set of unused colors. The \emph{maximum deficiency} of a graph, $\M(\Sigma) $, is $\max\{\defi(\kappa) \mid \kappa$ is a minimal proper coloration of $\Sigma \}.$ For more about deficiency, see \cite{mattern}.

Let $\Sigma_1$ and $\Sigma_2$ be signed graphs. The \emph{$\sigma^*$-join} of $\Sigma_1$ and $\Sigma_2$, written $\Sigma_1 \vee_{\sigma^*} \Sigma_2$, is the signed graph with
\begin{align*}
V &= V(\Sigma_1) \cup V(\Sigma_2), \\
E &= E(\Sigma_1) \cup E(\Sigma_2) \cup \{e_{vw} \mid v \in V(\Sigma_1), w \in V(\Sigma_2)\}, \\
\text{and } \sigma(e) &= \begin{cases}
\sigma_1(e) &\text{ if } e \in E(\Sigma_1), \\
\sigma_2(e) &\text{ if } e \in E(\Sigma_2), \\
\sigma^*(e) &\text{ otherwise,}
\end{cases}
\end{align*}
where $\sigma^*$ is a function from the new join edges to the set $\{+,-\}.$ 

We explore joins of signed graphs and prove an analogue to the theorem that the chromatic number of the join of two graphs equals the sum of their chromatic numbers. In this case, the chromatic number of the join of two signed graphs depends on both the chromatic numbers and the maximum deficiencies of the two graphs. 

\section{Joins of Signed Graphs}
\label{joins of signed graphs}
Unlike in ordinary graph theory, there are many ways to join two signed graphs. The result of joining two signed graphs depends on the signs of the new edges. In this paper we focus on the \emph{all-positive join} of two signed graphs, where each new join edge has positive sign. We write the all-positive join of $\Sigma_1$ and $\Sigma_2$ as $\Sigma_1 \vee_+ \Sigma_2.$ 

Let $\Sigma$ be a signed graph with even chromatic number and maximum deficiency $M$. Then $\Sigma$ is an {\it exceptional graph} if in every proper coloration using $\chi(\Sigma) - M$ colors, every color that is used appears at both ends of some negative edge. Figure 1 shows two examples of exceptional graphs. Graph A is a 6-chromatic graph with maximum deficiency 3. Graph B is a 4-chromatic graph with maximum deficiency 2. In both colorations, every color used appears on both endpoints of some edge. In depictions we use solid lines for positive edges and dashed lines for negative edges. 

\begin{figure}[H]
\centering
\label{exceptional}
\begin{tikzpicture}[
thick,
       acteur/.style={
         circle,
         fill=black,
         thick,
         inner sep=2pt,
         minimum size=0.2cm
       }
    ] 
\node at (0,0) [acteur,label = below:1]{};
\node at (3,0) [acteur,label = below:-2]{};
\node at (0,4) [acteur,label = above:3]{};
\node at (3,4) [acteur,label = above:3]{};
\node at (-1,2) [acteur,label = left:1]{};
\node at (4,2) [acteur,label = right: -2]{};

\node at (8,0) [acteur, label = below: 1]{};
\node at (10,0) [acteur, label = below: 2]{};
\node at (12,0) [acteur, label = below: 2]{};
\node at (8,2) [acteur, label = above: 1]{};
\node at (10,2) [acteur, label = above: 1]{};
\node at (12,2) [acteur, label = above: 1]{};

\draw[thick] (0,0) -- (3,0) -- (0,4) -- (-1,2) -- (4,2) -- (3,4) -- cycle;
\draw[thick, dashed] (0,4) -- (3,4) -- (-1,2) -- (0,0) -- cycle;
\draw[thick, dashed] (3,0) -- (4,2) -- (0,0);
\draw[thick, dashed] (3,0) -- (3,4);
\draw[thick, dashed] (3,0) -- (-1,2);
\draw[thick, dashed] (0,4) -- (4,2);

\draw[thick] (10,0) -- (10,2);
\draw[thick] (12,0) -- (12,2);
\draw[thick, dashed] (10,0) -- (8,0) -- (8,2) -- (10,2) -- (8,0);
\draw[thick, dashed] (10,0) -- (8,2);
\draw[thick, dashed] (10,0) -- (12,0) -- (12,2) -- (10,2) -- (12,0);
\draw[thick, dashed] (10,0) -- (12,2);
\node at (1.5, -2) [label = Graph A]{};
\node at (10,-2) [label = Graph B]{};
\end{tikzpicture}
\caption[Two properly colored exceptional graphs]{Two properly colored exceptional graphs.}
\end{figure}
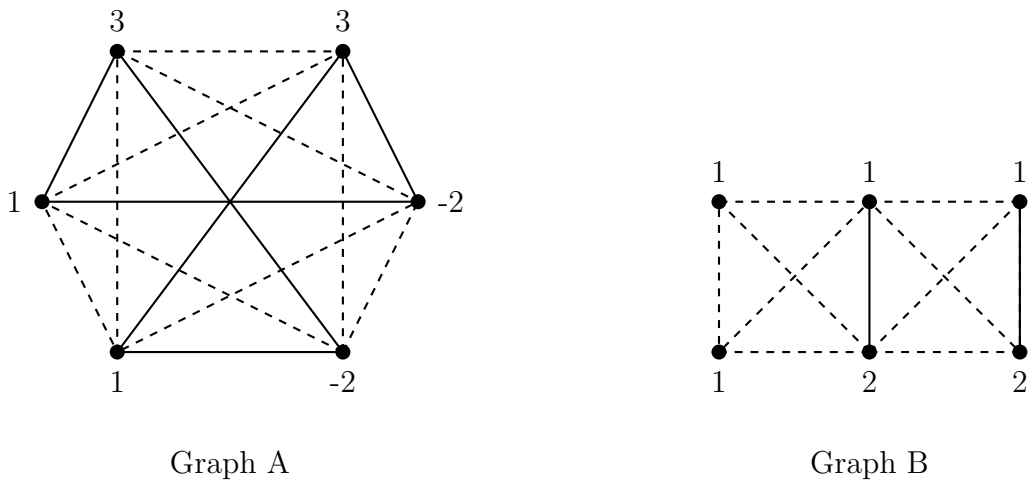

There exist infinitely many exceptional graphs. For example, for non-negative $k,$ if $\chi = 2k,$ the complete graph on $4k$ vertices with a negative perfect matching and all other edges positive is an exceptional graph with maximum deficiency 0. It remains an open problem to characterize exceptional graphs in terms of their structure.

\begin{thm}
Let $\Sigma_1$ and $\Sigma_2$ be signed graphs with maximum deficiencies $M_1$ and $M_2,$ respectively. Assume that $M_1 \geq M_2.$ Then, with one exception, 
$$\chi(\Sigma_1 \vee_+ \Sigma_2) = \max\{\chi_1 + \chi_2 - M_1 - M_2, \chi_1\}.$$

Exception: If  $\Sigma_1$ and $\Sigma_2$ both have even chromatic number, exactly one of $M_1$ and $M_2$ is odd, and both $\Sigma_1$ and $\Sigma_2$ are exceptional graphs, then 
$$\chi(\Sigma_1 \vee_+ \Sigma_2) = \max\{\chi_1 + \chi_2 - M_1 - M_2 +1, \chi_1\}.$$
\label{main thm}
\end{thm}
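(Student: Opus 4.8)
The plan is to turn the chromatic number into a packing quantity and then optimize it. The defining feature of the all-positive join is that every vertex of $\Sigma_1$ is joined to every vertex of $\Sigma_2$ by a positive edge; since a positive edge $vw$ forbids only $\kappa(v)=\kappa(w)$, a coloration of $\Sigma_1\vee_+\Sigma_2$ is proper exactly when its restrictions $\kappa_1,\kappa_2$ to $\Sigma_1,\Sigma_2$ are proper and use disjoint color sets. Hence
$$\chi(\Sigma_1\vee_+\Sigma_2)=\min\bigl\{|P| : \Sigma_1,\Sigma_2 \text{ have proper colorations with disjoint color sets inside } P\bigr\},$$
with $P$ ranging over the admissible palettes $\{\pm1,\dots,\pm k\}$ and $\{\pm1,\dots,\pm k,0\}$. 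For a color set $C$ I would record the number $p(C)$ of magnitudes used with both signs, the number $q(C)$ used with a single sign, and whether $0\in C$; these are the only data preserved by the signed permutations that preserve properness. Because one single-sign color of $\Sigma_1$ and one of $\Sigma_2$ can share a single magnitude of $P$ (opposite signs), while a both-signs magnitude is unshareable and $0$ can serve only one side, a direct count yields the governing identity: the smallest palette carrying chosen colorations has size $|C_1|+|C_2|+|q(C_1)-q(C_2)|$ (subject to at most one of them using $0$). The whole problem is now to minimize $|C_1|+|C_2|$ and the half-magnitude imbalance $|q(C_1)-q(C_2)|$ together.

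For the lower bound I would use two facts. First, from the deficiency theory of \cite{mattern}, no proper coloration of $\Sigma_i$ uses fewer than $\chi_i-M_i$ colors, so $|C_i|\ge\chi_i-M_i$. Second, $M_i\le\lfloor\chi_i/2\rfloor$: a minimal coloration must occupy at least $\lceil\chi_i/2\rceil$ magnitudes (otherwise it would fit in a strictly smaller palette, and $0$ is forced when $\chi_i$ is odd), so it uses at least $\lceil\chi_i/2\rceil$ colors. The second fact gives $M_1+M_2\le 2M_1\le\chi_1$, whence $\chi_1+\chi_2-M_1-M_2\ge\chi_2$; combined with the trivial $|P|\ge\chi_1$ (as $\Sigma_1$ is an induced subgraph) and the identity above, this yields $\chi(\Sigma_1\vee_+\Sigma_2)\ge\max\{\chi_1,\ \chi_1+\chi_2-M_1-M_2\}$, subsuming the bound coming from $\Sigma_2$. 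The exceptional $+1$ is a parity effect: if $\chi_i$ is even and $0\notin C_i$ then $q(C_i)\equiv|C_i|\pmod2$, so a coloration with $\chi_i-M_i$ colors and no use of $0$ has $q(C_i)\equiv M_i\pmod2$. When exactly one of $M_1,M_2$ is odd these parities disagree, forcing $|q(C_1)-q(C_2)|\ge1$ unless one can change a parity at no cost in colors.

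Whether that free parity change exists is the crux, and it is exactly what the exceptional hypothesis controls. Two color-count-preserving recolorings change $q$: sending all vertices of a single-sign color $c$ to $0$ changes $q(C_i)$ by one, and splitting a both-signs magnitude into a single-sign pair (recoloring one sign's vertices to a fresh magnitude) changes it by two. The split is always proper, since recoloring to a fresh color can create no conflict; the move to $0$ is proper precisely when $c$ does not occur at both ends of a negative edge, because two $0$'s may never be adjacent. Such a color $c$ exists in some coloration using $\chi_i-M_i$ colors unless $\Sigma_i$ is exceptional. Hence if at least one of $\Sigma_1,\Sigma_2$ is non-exceptional I can first match the parities of $q(C_1),q(C_2)$ via a single move to $0$, then equalize their values by splitting both-signs magnitudes, driving $|q(C_1)-q(C_2)|$ to $0$ and realizing $\chi_1+\chi_2-M_1-M_2$. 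When both graphs are exceptional with mismatched parities no such free move is available, and spending one extra color on one side to flip its parity and rebalance gives $\chi_1+\chi_2-M_1-M_2+1$.

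Finally, in the regime $\chi_2\le M_1+M_2$, where the answer is $\chi_1$, I would realize a maximum-deficiency coloration of $\Sigma_1$, whose $M_1$ unused colors split into some both-signs-free magnitudes and some single-sign-free slots, and embed a $(\chi_2-M_2)$-color coloration of $\Sigma_2$ into these free colors by a signed relabeling; here $\chi_2-M_2<M_1$ leaves room, and the profiles of both graphs are adjustable by splitting. I expect the principal difficulty to be exactly this profile bookkeeping: determining the attainable values of $q(C_i)$ over all colorations using $\chi_i-M_i$ colors, so as to guarantee either that the imbalance can be annihilated (non-exceptional case) or that it is rigidly $1$ (exceptional case), while ensuring every relabeling of $\Sigma_2$ still respects its negative edges. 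Pinning down how $0$ and the negative edges constrain these profiles---the content of the exceptional-graph definition---is where the real work of the proof concentrates.
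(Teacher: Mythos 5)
Your reduction of the problem to a packing identity is correct and is a genuinely different organization from the paper's proof: the minimum admissible palette holding disjoint proper colorations with color sets $C_1,C_2$ (at most one of which uses $0$) does have size $|C_1|+|C_2|+|q(C_1)-q(C_2)|$, your two recoloring moves are exactly the paper's Type~2 replacement and a special case of its Type~1 replacement, and your parity derivation of the exceptional lower bound is equivalent to the paper's argument (there phrased as: $\chi_1+\chi_2-M_1-M_2$ is odd, so a minimal coloration of the join must use $0$, but exceptionality forces every used color onto both ends of a negative edge, which $0$ cannot satisfy). Where the paper proves the upper bounds by four case-analysis lemmas with explicit replacement tables, you would prove them by equalizing the single-sign counts $q(C_1),q(C_2)$ through local moves.

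However, there is a genuine gap, and you have located it yourself: the argument hinges on knowing which profiles are attainable by colorations using exactly $\chi_i-M_i$ colors, and you never establish this. The missing anchor is the structural fact (the paper's Proposition 3.2, and the assertion inside its Lemma 4.1) that the deficiency set of a maximum-deficiency minimal coloration consists of colors with \emph{distinct absolute values}; equivalently, there is always a starting coloration with profile $q_i=M_i$, with $0$ used exactly when $\chi_i$ is odd, and with $\lfloor\chi_i/2\rfloor-M_i$ full magnitude pairs. Without it, ``match parities, then equalize by splitting'' is not yet a proof: splitting only \emph{increases} $q$ (the reverse merge is generally improper, since negative edges between two single-sign classes obstruct it), so you must check that the side with smaller $q$ always has enough both-sign magnitudes to split, i.e.\ $\lfloor\chi_2/2\rfloor-M_2\geq$ half the imbalance. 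That capacity check is precisely where the hypothesis $\chi_2>M_1+M_2$ must enter, and you never perform it. Your move list also omits the move required when $\chi_1$ and $\chi_2$ are both odd: both starting colorations are then forced to use $0$, which is infeasible for packing, and a $0$-class must be relocated to a single-sign color (the paper's trick of sending the two $0$-classes to $+(k_1+1)$ and $-(k_1+1)$, sharing one fresh magnitude across the join). These verifications are exactly the content of the paper's Lemmas 5.1 and 5.2, which constitute the bulk of its proof; what you defer as ``bookkeeping'' is the theorem itself.
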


Note that in Theorem \ref{main thm},  $\chi_2$ can never be larger than $\chi_1 + \chi_2 - M_1 - M_2.$ If $\chi_2 > \chi_1 + \chi_2 - M_1 - M_2,$ this would imply that $M_2 > \chi_1 - M_1.$ Since $M_1 \leq \frac{1}{2}\chi_1,$ we then would have $M_2 > \frac{1}{2} \chi_1 \geq M_1$. This contradicts our assumption that $M_1 \geq M_2.$

Let $A$ be a set of vertices of a signed graph $\Sigma.$ \emph{Switching} $A$ is negating the signs of the edges with exactly one endpoint in $A$. Two signed graphs are \emph{switching equivalent} if they are related by switching. The chromatic number of a signed graph is the same as the chromatic number of the switched graph. A minimal coloration of the switched graph is simply a byproduct of switching the graph; given a minimal coloration of $\Sigma$, the signs of the colors on $A$ are negated during the switching process. Two colorations of $\Sigma, \kappa$ and $\kappa^*,$ are \emph{switching equivalent} if they are related by switching.

\begin{rmk}
Theorem \ref{main thm} also holds for the all-negative join of signed graphs. The possible joins of $\Sigma_1$ and $\Sigma_2$ come in switching equivalent pairs. For a signature $\sigma,$ let $-\sigma$ be the signature in which the sign of an edge $e$ is $-\sigma(e).$ 
Then $\Sigma_1 \vee_{\sigma^*} \Sigma_2$ switches to $\Sigma_1 \vee_{-\sigma^*} \Sigma_2$. Thus, $\Sigma_1 \vee_- \Sigma_2$ switches to $\Sigma_1 \vee_+ \Sigma_2$ by switching the vertices of $\Sigma_1.$ Furthermore, switching all the vertices of $\Sigma_1$ does not change the signs of the edges in $\Sigma_1$ or $\Sigma_2$, and thus does not change the maximum deficiencies.
\end{rmk}

\section{Preliminaries}
\label{preliminaries}

When coloring the graph $\Sigma_1 \vee_+ \Sigma_2,$ one cannot use the same color on both the vertices of $\Sigma_1$ and $\Sigma_2$. This gives a straightforward lower bound for the chromatic number of the all-positive join.

\begin{lem}
Let $\Sigma_1$ and $\Sigma_2$ be signed graphs with maximum deficiencies $M_1$ and $M_2$, respectively. Then $\chi(\Sigma_1 \vee_+ \Sigma_2) \geq \chi_1 + \chi_2 - M_1 - M_2.$
\label{lower bound}
\end{lem}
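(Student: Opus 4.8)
The plan is to start from an arbitrary minimal coloration $\kappa$ of $\Sigma_1 \vee_+ \Sigma_2$ and exploit the fact, flagged in the paragraph before the lemma, that every join edge is positive. For a positive join edge $e_{vw}$ with $v \in V(\Sigma_1)$ and $w \in V(\Sigma_2)$, properness demands $\kappa(v) \neq \sigma(e)\kappa(w) = \kappa(w)$, and since this holds for \emph{every} such pair, no color (the value $0$ included) can appear simultaneously on a vertex of $\Sigma_1$ and a vertex of $\Sigma_2$. Writing $S_i = \kappa(V(\Sigma_i))$ for the set of colors $\kappa$ actually uses on $\Sigma_i$, this says $S_1 \cap S_2 = \emptyset$, so the total number of colors appearing in $\kappa$ is exactly $|S_1| + |S_2|$. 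Because $\kappa$ draws its colors from a color set of size $\chi(\Sigma_1 \vee_+ \Sigma_2)$, we get $\chi(\Sigma_1 \vee_+ \Sigma_2) \geq |S_1| + |S_2|$ at once.

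First I would reduce everything to a statement about each factor separately: it suffices to show $|S_i| \geq \chi_i - M_i$ for $i = 1,2$. Indeed, $\kappa$ restricted to $V(\Sigma_i)$ is itself a proper coloration of $\Sigma_i$ (all edges of $\Sigma_i$ survive in the join) that uses precisely the $|S_i|$ colors of $S_i$. So the claim I need is the intrinsic fact that every proper coloration of $\Sigma_i$ uses at least $\chi_i - M_i$ distinct colors. Granting this, the lemma follows by adding the two inequalities, since $\chi(\Sigma_1 \vee_+ \Sigma_2) \geq |S_1| + |S_2| \geq (\chi_1 - M_1) + (\chi_2 - M_2)$.

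To establish the intrinsic fact I would proceed as follows. Let $\tau$ be any proper coloration of $\Sigma_i$ using $t$ distinct colors. Applying a signed permutation of the color set, which preserves properness because it commutes with negation, I may assume the magnitudes occurring in $\tau$ are $1, 2, \ldots, m$ together with possibly $0$, where $m$ is the number of distinct magnitudes used; this recolors $\tau$ into a coloration using the same $t$ colors drawn from a standard color set of size $2m$ or $2m+1$. When that color set has size at most $\chi_i$, I can pad it out to size exactly $\chi_i$, exhibiting a \emph{minimal} coloration of $\Sigma_i$ whose deficiency is $\chi_i - t$; by the definition of maximum deficiency this forces $\chi_i - t \leq M_i$, i.e.\ $t \geq \chi_i - M_i$, as wanted.

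The main obstacle is the remaining case, in which the compressed color set is \emph{larger} than $\chi_i$, so that $\tau$ genuinely spreads its colors across more magnitudes than any minimal coloration would. Here the padding argument is unavailable, and one cannot in general merge two magnitudes without creating a conflict across a negative edge, so a direct reduction fails. This is precisely the kind of question answered by the deficiency analysis of \cite{mattern}, where the minimum number of colors in any proper coloration of a signed graph is identified with $\chi - M$; I would invoke that result to close the gap. Equivalently, one shows that a coloration using the fewest colors may always be taken to be minimal, so the extra-magnitude case never drives the color count below $\chi_i - M_i$. Once this intrinsic bound is secured, the disjointness observation of the first paragraph does all the remaining work.
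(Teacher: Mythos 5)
Your first two steps reproduce what the paper actually does: Lemma \ref{lower bound} is stated there without proof, on the strength of precisely your observation that positive join edges force the sets of colors used on $\Sigma_1$ and on $\Sigma_2$ to be disjoint, so that the color set of any proper coloration of the join has size at least $|S_1|+|S_2|$. You have also correctly isolated the load-bearing claim that the paper leaves unstated: that every proper coloration of $\Sigma_i$, from a color set of \emph{any} size, uses at least $\chi_i - M_i$ distinct colors. Your padding argument does settle this claim when the used colors fit, after a signed permutation, inside a color set of size at most $\chi_i$ (modulo a small fixable wrinkle: if $0$ is used and $\chi_i$ is even, you must first recolor the $0$-class, which is independent, by a fresh magnitude before padding).

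The remaining case, however, is a genuine gap, and it cannot be closed: the statement you propose to import from \cite{mattern} --- that the minimum number of colors over all proper colorations equals $\chi - M$ --- is false for $M$ as defined in this paper, namely the maximum deficiency over \emph{minimal} colorations only. Take $\Sigma$ with vertices $v_1,\dots,v_6$, positive edges forming the hexagon $v_1v_2, v_2v_3, \dots, v_6v_1$, negative edges $v_1v_3$, $v_1v_5$, $v_2v_4$, $v_4v_6$, $v_2v_5$, $v_3v_6$, and non-edges $v_1v_4$, $v_2v_6$, $v_3v_5$. Coloring $v_1,v_3,v_5 \mapsto 1$ and $v_2,v_4,v_6 \mapsto 2$ is proper (negative edges never join oppositely signed colors here), so two colors suffice. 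But $\chi(\Sigma)=3$: with color set $\{\pm 1\}$ the positive hexagon forces the odd/even bipartition, which the negative edge $v_2v_5$ then makes improper, while $v_1,v_4 \mapsto 0$, $v_2,v_5 \mapsto 1$, $v_3,v_6 \mapsto -1$ is proper. And $M(\Sigma)=0$: a coloration from $\{0,\pm 1\}$ missing a color either misses $0$, which is the impossible $\{\pm1\}$ case, or else the hexagon forces the odd/even bipartition with one side colored $0$, impossible because each side contains a negative edge ($v_1v_3$, resp.\ $v_2v_4$), and a negative edge cannot have both ends colored $0$. So this $\Sigma$ has a proper coloration using $2 < 3 = \chi - M$ colors; your closing assertion that ``a coloration using the fewest colors may always be taken to be minimal'' is exactly what fails.

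The same example shows the defect is in the statement as literally written, not merely in your proof (or the paper's missing one): coloring one copy of $\Sigma$ with $\{1,2\}$ as above and a second copy with $\{-1,-2\}$ is proper on the all-positive join, so $\chi(\Sigma \vee_+ \Sigma) \le 4$, while $\chi_1+\chi_2-M_1-M_2 = 6$. The lemma holds only if $M_i$ is reinterpreted as $\chi_i$ minus the minimum number of colors used over \emph{all} proper colorations of $\Sigma_i$; with that reading, your disjointness paragraph plus the definition of this invariant proves the lemma immediately, and the compression/padding analysis becomes unnecessary.
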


Throughout the proofs of Theorem \ref{main thm}, we use several recoloration tools in order to give a proper coloration of the correct size. Define the following replacement types:

{\bf Type 1:} Let $r \in \mathbb Z \setminus \{0\}.$ For $i$ in the color set of $\kappa,$ recolor the $i$-color set with the color $r.$

{\bf Type 2:} For $i \neq 0$ in the color set of $\kappa,$ such that $i$ does not appear on both endpoints of an edge, recolor the $i$-color set with the color 0.

{\bf Type 3:} Let $r \in \mathbb Z \setminus \{0\}.$ For $i$ and $-i$ in the color set of $\kappa,$ such that $i \neq 0$, recolor the $i$-color set with the color $r$ and the $(-i)$-color set with the color $-r$.

{\bf Type 4:} Let $r_1, r_2 \in \mathbb Z \setminus \{0\}$ such that $r_1 \neq -r_2.$ For $i$ and $-i$ in the color set of $\kappa,$ such that $i \neq 0$, recolor the $i$-color set with the color $r_1$ and the $(-i)$-color set with the color $r_2$.

\begin{proposition}
Let $\Sigma$ be a signed graph with even chromatic number and non-zero maximum deficiency. Then in every minimal coloration with maximum deficiency, the negative of every color in the deficiency set appears on both endpoints of some negative edge.
\label{even chromatic}
\end{proposition}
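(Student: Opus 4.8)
The plan is to fix a minimal coloration $\kappa$ attaining the maximum deficiency $M=\M(\Sigma)>0$; since $\chi(\Sigma)=2k$ is even, the palette of $\kappa$ is $\{\pm 1,\dots,\pm k\}$, no vertex is colored $0$, and the deficiency set $\D(\kappa)$ is nonempty. I fix $c\in\D(\kappa)$ and must show that $-c$ occurs at both endpoints of some negative edge. Two preliminary observations will be used repeatedly. First, properness forces the two endpoints of a positive edge to receive distinct colors, so no color occurs at both endpoints of a positive edge; hence for $-c$ the phrase \emph{both endpoints of a negative edge} is equivalent to \emph{both endpoints of some edge}. Second, any bijection of the colors that commutes with negation carries proper colorations to proper colorations, because the condition $\kappa(a)\neq\sigma(e)\kappa(b)$ is preserved by such a relabeling.

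The first step is to show that $-c$ is used by $\kappa$. If it were not, then both $c$ and $-c$ would lie in $\D(\kappa)$ and $\kappa$ would color $\Sigma$ using only colors from the symmetric set $\{\pm 1,\dots,\pm k\}\setminus\{c,-c\}$; relabeling this set to $\{\pm 1,\dots,\pm(k-1)\}$ would exhibit a proper coloration with a palette of size $2k-2$, contradicting $\chi(\Sigma)=2k$. Hence $-c$ appears on at least one vertex.

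The main step, and the crux of the argument, is to rule out that $-c$ never appears at both endpoints of a negative edge. Suppose it does not. By the first observation $-c$ then appears at both endpoints of no edge, so the Type 2 replacement applies with $i=-c$: recoloring every vertex of the $(-c)$-color set with $0$ yields a coloration $\kappa'$. This $\kappa'$ is proper, since two recolored vertices cannot be adjacent (a positive joining edge is forbidden by properness and a negative one by hypothesis), an edge with exactly one recolored endpoint now pairs $0$ with a nonzero color, and all other edges are unaffected. But $\kappa'$ uses no color of the pair $\{c,-c\}$, so its colors lie in $(\{\pm 1,\dots,\pm k\}\setminus\{c,-c\})\cup\{0\}$; relabeling as above gives a proper coloration in the palette $\{0,\pm 1,\dots,\pm(k-1)\}$ of size $2k-1$, whence $\chi(\Sigma)\le 2k-1<2k$, a contradiction. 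Therefore $-c$ appears at both endpoints of some negative edge.

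The delicate point throughout is that each contradiction comes from shrinking the palette, and the step that makes this possible is the absence of $0$ from an even palette: it is precisely what lets the Type 2 recoloring trade the color $-c$ for $0$ and collapse the pair $\{c,-c\}$, dropping the palette size from $2k$ to $2k-1$. I expect the only verification needing genuine care is that the Type 2 recoloring remains proper, since the two relabelings are routine once one checks that negation-preserving color bijections preserve properness. I also note that maximality of the deficiency is used only to guarantee $\D(\kappa)\neq\varnothing$; the same argument yields the conclusion for every minimal coloration with nonempty deficiency set.
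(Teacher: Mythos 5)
Your proof is correct and follows essentially the same route as the paper: assume some $-c$ (for $c$ in the deficiency set) never appears on both endpoints of a negative edge, apply the Type 2 replacement sending the $(-c)$-color set to $0$, and derive a contradiction with $\chi(\Sigma)=2k$ because the pair $\{c,-c\}$ drops out of the palette. Your treatment is somewhat more careful than the paper's --- you separately verify that $-c$ must be used, that negation-commuting relabelings preserve properness, and that the recolored coloration is proper --- but these are elaborations of the same argument, not a different one.
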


\begin{proof}
Let $\Sigma$ be a signed graph with even chromatic number $\chi$ and non-zero maximum deficiency $M$. Let $\kappa$ be a coloration of $\Sigma$ such that $\defi(\kappa) = M.$ Suppose to the contrary that there exists some $i \in \D(\kappa)$ such that $-i$ does not appear on both endpoints of a negative edge. Thus, if $\kappa(v) = -i$, then no neighbor of $v$ is colored $-i.$ Recoloring all vertices colored $-i$ with the color 0 yields a proper coloration since $\kappa$ did not use the color 0 and no two vertices colored 0 are adjacent. But, the size of the new color set is $\chi - 1.$
\end{proof}

\section{Chromatic Number $\chi_1$ and the Exceptional Case}
\label{chi_1 and exception}

\begin{lem}
If $\chi_1 \geq \chi_1 + \chi_2 - M_1 -M_2,$ then $\chi(\Sigma_1 \vee_+ \Sigma_2) = \chi_1.$
\label{exception 1}
\end{lem}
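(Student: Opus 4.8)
The two inequalities to establish are $\chi(\Sigma_1 \vee_+ \Sigma_2) \ge \chi_1$ and $\chi(\Sigma_1 \vee_+ \Sigma_2) \le \chi_1$, and the first thing I would do is unwind the hypothesis: $\chi_1 \ge \chi_1 + \chi_2 - M_1 - M_2$ is just $M_1 \ge \chi_2 - M_2$, which is exactly the amount of room the construction will need. For the lower bound I would note that $\Sigma_1$ is an induced subgraph of $\Sigma_1 \vee_+ \Sigma_2$: restricting any proper coloration of the join to $V(\Sigma_1)$ gives a proper coloration of $\Sigma_1$ using a color set of the same size, so $\chi_1 \le \chi(\Sigma_1 \vee_+ \Sigma_2)$. (The bound of Lemma~\ref{lower bound} is weaker here, since $\chi_1 + \chi_2 - M_1 - M_2 \le \chi_1$ in this regime.)

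For the upper bound the plan is to build a proper coloration of $\Sigma_1 \vee_+ \Sigma_2$ that uses a color set of size only $\chi_1$. I would fix a minimal coloration $\kappa_1$ of $\Sigma_1$ with $\defi(\kappa_1) = M_1$, using the canonical color set $C_1$ of size $\chi_1$; its deficiency set $\D(\kappa_1)$ then consists of $M_1$ colors that $\Sigma_1$ leaves free. I would also fix a minimal coloration $\kappa_2$ of $\Sigma_2$ with $\defi(\kappa_2) = M_2$, so that $\kappa_2$ uses exactly $\chi_2 - M_2$ colors. Since every join edge is positive and every vertex of $\Sigma_1$ meets every vertex of $\Sigma_2$, the colors on $\Sigma_2$ must be disjoint from those on $\Sigma_1$; so the goal is to move $\kappa_2$'s colors into the free slots $\D(\kappa_1)$. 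The hypothesis gives $\chi_2 - M_2 \le M_1 = |\D(\kappa_1)|$, so an injection $\phi$ from the colors used by $\kappa_2$ into $\D(\kappa_1)$ exists (realizable as a sequence of Type~1 replacements, using fresh intermediate labels to avoid collisions). I would then set $\kappa = \kappa_1$ on $V(\Sigma_1)$ and $\kappa = \phi \circ \kappa_2$ on $V(\Sigma_2)$; its colors all lie in $C_1$, a set of size $\chi_1$.

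It remains to verify that $\kappa$ is proper, and this is the one point that needs care. Positive edges inside $\Sigma_2$ stay proper because $\phi$ is injective; the join edges stay proper because $\kappa(V(\Sigma_1)) = C_1 \setminus \D(\kappa_1)$ is disjoint from $\kappa(V(\Sigma_2)) \subseteq \D(\kappa_1)$; and $\Sigma_1$ is colored just as by $\kappa_1$. The delicate case is a negative edge $ab$ of $\Sigma_2$, where properness requires $\phi(\kappa_2(a)) \ne -\phi(\kappa_2(b))$. Here I would invoke the key structural fact that $\D(\kappa_1)$ contains no $\pm$ pair and does not contain $0$: if $\{i,-i\} \subseteq \D(\kappa_1)$ then $\kappa_1$ avoids a whole pair, and relabeling its remaining classes by a sign-preserving bijection (Type~3 relabelings) would properly color $\Sigma_1$ with only $\chi_1 - 2$ colors, contradicting minimality; and $0 \notin \D(\kappa_1)$ because when $0$ is available it must be used, while for even $\chi_1$ the no-pair claim is also immediate from Proposition~\ref{even chromatic}. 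Granting this, the images of $\phi$ never form a $\pm$ pair and are never $0$, so $\phi(x) = -\phi(y)$ is impossible for colors $x,y$; in particular every negative edge of $\Sigma_2$ remains proper. Thus $\kappa$ is proper and uses $\chi_1$ colors, giving $\chi(\Sigma_1 \vee_+ \Sigma_2) \le \chi_1$, hence equality.

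The main obstacle is precisely the negative-edge check, that is, proving that no $\pm$ pair survives in $\D(\kappa_1)$; once that is secured, an arbitrary injection works and no sign-tracking is needed, so the whole argument collapses to a counting statement plus this structural lemma. As a consistency check with Theorem~\ref{main thm}, I would also record that the exceptional case cannot interfere in this regime: raising the value above $\chi_1$ would force $\chi_2 = M_1 + M_2$, and combined with exactly one of $M_1, M_2$ being odd this makes $\chi_2$ odd, contrary to the exceptional hypothesis that $\chi_2$ is even.
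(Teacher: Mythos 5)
Your proof is correct and follows essentially the same route as the paper's: the lower bound comes from restricting any proper coloration of the join to $\Sigma_1$, and the upper bound comes from coloring $\Sigma_2$ injectively with colors from the deficiency set of a maximum-deficiency coloration of $\Sigma_1$. The only difference is one of detail: the paper simply asserts that this deficiency set consists of $M_1$ colors with distinct absolute values (and hence that $\Sigma_2$ can be properly colored within it), whereas you prove that structural fact explicitly via the Type~3 relabeling argument --- a worthwhile addition, but not a different approach.
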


\begin{proof}
Let $\kappa$ be a coloration of $\Sigma_1 \vee_+ \Sigma_2.$ Then $\kappa$ restricted to $\Sigma_1$ is a proper coloration of $\Sigma_1.$ Therefore, the size of the color set must be at least $\chi_1.$

Now we show there exists a proper coloration of $\Sigma_1 \vee_+ \Sigma_2$ using a color set of size $\chi_1$. Color $\Sigma_1 \vee_+ \Sigma_2$ in the following way.
\begin{enumerate}
\item Properly color $\Sigma_1$ using $\chi_1 - M_1$ colors. Call this coloration $\kappa$.
\item Properly color $\Sigma_2$ using the colors in the deficiency set of $\kappa$. This is possible since the deficiency set is made up of $M_1$ colors with distinct absolute values, and $M_1 > \chi_2 - M_2$, because $\chi_1 > \chi_1 + \chi_2 - M_1 - M_2.$ 
\end{enumerate}  

This coloration is proper on $\Sigma_1$ and $\Sigma_2$ by definition. Since every join edge is positive, and every color used on $\Sigma_2$ does not appear on a vertex of $\Sigma_1$, the coloration is proper on $\Sigma_1 \vee_+ \Sigma_2.$ Furthermore, the size of the color set is $\chi_1,$ and therefore, $\chi(\Sigma_1 \vee_+ \Sigma_2) = \chi_1.$
\end{proof}

\begin{lem}[Exception]
Assume that $\chi_1$ and $\chi_2$ are even, $M_1 > M_2,$ and $\chi_1 \leq \chi_1 + \chi_2 - M_1 - M_2 + 1.$ If exactly one of $M_1$ and $M_2$ is odd, and both $\Sigma_1$ and $\Sigma_2$ are exceptional, then $\chi(\Sigma_1 \vee_+ \Sigma_2) = \chi_1 + \chi_2 - M_1 - M_2 +1.$
\label{exception 2}
\end{lem}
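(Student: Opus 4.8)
Write $a_i=\chi_i-M_i$, $k_i=\tfrac12\chi_i$, and $N=\chi_1+\chi_2-M_1-M_2+1=a_1+a_2+1$. Since exactly one of $M_1,M_2$ is odd, $M_1+M_2$ is odd, so $a_1+a_2=N-1$ is odd and $N$ is even. The plan is to prove $\chi(\Sigma_1\vee_+\Sigma_2)\ge N$ and $\chi(\Sigma_1\vee_+\Sigma_2)\le N$ separately, after disposing of a boundary case. If $N=\chi_1$ then $\chi_1>\chi_1+\chi_2-M_1-M_2$, so Lemma~\ref{exception 1} already gives $\chi(\Sigma_1\vee_+\Sigma_2)=\chi_1=N$; hence I may assume $N>\chi_1$, equivalently $\chi_2\ge M_1+M_2$, for the rest. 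I will also use the fact from the deficiency theory that the minimum number of colors appearing in any proper coloration of a signed graph $\Sigma$ equals $\chi(\Sigma)-\M(\Sigma)$.

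For the lower bound I would show that $N-1$ colors never suffice. Lemma~\ref{lower bound} gives $\chi\ge N-1$, so suppose for contradiction that $\kappa$ is a proper coloration of $\Sigma_1\vee_+\Sigma_2$ with a color set $S$ of size exactly $N-1$. Because $N-1$ is odd, $S=\{\pm1,\dots,\pm m,0\}$ contains $0$. Every join edge is positive and every vertex of $\Sigma_1$ is adjacent to every vertex of $\Sigma_2$, so the colors $C_1,C_2$ used on the two sides are disjoint, and each restriction $\kappa|_{\Sigma_i}$ is a proper coloration of $\Sigma_i$ and therefore uses at least $a_i$ colors. Then $|C_1|+|C_2|\ge a_1+a_2=|S|$ forces $|C_1|=a_1$, $|C_2|=a_2$, and $C_1\sqcup C_2=S$. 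In particular $0$ is used on one side, say on $\Sigma_1$; then $\kappa|_{\Sigma_1}$ is a proper coloration of the exceptional graph $\Sigma_1$ using exactly $\chi_1-M_1$ colors, so the definition of exceptional forces $0$ to appear at both ends of a negative edge — impossible, since a negative edge with both endpoints $0$ violates $0\ne-0$. The symmetric contradiction when $0\in C_2$ uses that $\Sigma_2$ is exceptional. Hence $\chi\ge N$. This is the step where all the hypotheses enter: evenness of $\chi_1,\chi_2$ and the parity of $M_1+M_2$ put $0$ into the forced color set, while exceptionality of \emph{both} graphs makes $0$ unusable.

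For the upper bound I would exhibit a proper coloration using the even color set $\{\pm1,\dots,\pm N/2\}$, which contains no $0$. Starting from maximum-deficiency colorations of $\Sigma_1$ and $\Sigma_2$, Proposition~\ref{even chromatic} shows each consists of some magnitudes used with both signs (``doubles'') together with exactly $M_i$ magnitudes used with a single sign. Assuming $a_1\le a_2$, I relabel $\Sigma_1$ so that all of its $a_1$ colors become singles (splitting each double via a Type~4 replacement into two fresh singles of freely chosen sign) occupying $a_1$ distinct magnitudes, and I keep $\Sigma_2$ with $d_2=\tfrac12(a_2-a_1-1)$ doubles and $a_1+1$ singles. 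I then pair each of $\Sigma_1$'s singles with an oppositely signed single of $\Sigma_2$ at a shared magnitude, send the remaining double-classes of $\Sigma_2$ to fresh magnitudes, and place the one leftover $\Sigma_2$-single in its own half-filled magnitude. A direct count gives exactly $N/2$ magnitudes and one unused color, with $0$ never appearing; this yields $\chi\le N$, and with the lower bound, equality. The parity of $M_1+M_2$ is precisely what makes $\tfrac12(a_2-a_1-1)$ an integer and $N/2$ the exact number of magnitudes the packing needs, and the inequality $\chi_2\ge M_1+M_2$ guarantees the single spare color.

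The main obstacle I expect is the sign bookkeeping in this construction: the $M_1+M_2$ colors whose signs are \emph{forced} by the deficiency structure must be arranged into opposite-sign pairs at the shared magnitudes. Type~4 splitting supplies singles of freely chosen sign, and globally negating either side flips all of that side's forced singles simultaneously, so with the one-color slack one can match signs; but verifying that enough free (split) singles are available to pair with all forced singles, and re-choosing which doubles of $\Sigma_2$ to split when they are not, is the delicate part and will require a short case analysis on the sizes of $M_1$ and $M_2$. By contrast, the lower bound — the conceptual heart of the exceptional case — is short once the ``$\chi-\M$ colors are necessary'' fact and the impossibility of a $0$-colored negative edge are in hand.
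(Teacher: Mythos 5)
Your proposal is correct and follows essentially the same route as the paper: your lower bound is the paper's argument almost verbatim (parity puts $0$ into any color set of size $N-1$, disjointness of the two sides' colors together with the fact that any proper coloration of $\Sigma_i$ uses at least $\chi_i-M_i$ colors forces every color, including $0$, to be used on one side, and exceptionality of both graphs makes that impossible), while your upper bound packs maximum-deficiency colorations into $N/2$ magnitudes via Type 1/3/4 relabelings, which is just a symmetrized reorganization of the paper's construction that keeps $\Sigma_1$ fixed and recolors only $\Sigma_2$ into $\Sigma_1$'s deficiency set plus fresh magnitudes. The ``sign bookkeeping'' you flag as the main obstacle is not actually an obstacle: no signs are forced, since a single color class can be relabeled to a fresh color of either sign by a Type 1 replacement (its induced subgraph is all-negative and its magnitude is fresh on its own side), and assigning $r$ on one side and $-r$ on the other side never conflicts because all join edges are positive.
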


\begin{proof}
Let $\chi_1 = 2k_1$ and $\chi_2 = 2k_2$ for some positive integers $k_1$ and $k_2.$ By Lemma \ref{lower bound} we know that $\chi(\Sigma_1 \vee_+ \Sigma_2) \geq \chi_1 + \chi_2 - M_1 - M_2.$ 

Suppose that $\chi(\Sigma_1 \vee_+ \Sigma_2) = \chi_1 + \chi_2 - M_1 - M_2.$ Note that $\chi_1 + \chi_2 - M_1 - M_2$ is odd. Thus a minimal coloration of $\Sigma_1 \vee_+ \Sigma_2$ must use 0 as a color. Let $\kappa$ be such a coloration. Because all join edges are positive, no color can be used on both $\Sigma_1$ and the vertices of $\Sigma_2.$ Thus, $\kappa$ must use $2k_1 - M_1$ colors on $\Sigma_1$ and $2k_2 - M_2$ different colors on the vertices of $\Sigma_2.$ Therefore $\kappa$ restricted to $\Sigma_i$ is a proper coloration using $\chi_i - m_i$ colors. Since both $\Sigma_1$ and $\Sigma_2$ are exceptional graphs, every color must appear on both endpoints of some negative edge. But the color 0 cannot appear on both endpoints of an edge in a proper coloration. Therefore, $\chi(\Sigma_1 \vee_+ \Sigma_2) > \chi_1 + \chi_2 - M_1 - M_2.$

Now we show that there exists a proper coloration of $\Sigma_1 \vee_+ \Sigma_2$ using a color set of size $\chi_1 + \chi_2 - M_1 - M_2 + 1$. Although the size of the color set will be $\chi_1 + \chi_2 - M_1 - M_2 + 1$, we will only use $\chi_1 + \chi_2 - M_1 - M_2$ colors. Color $\Sigma_1 \vee_+ \Sigma_2$ in the following way.
\begin{enumerate}
\item Properly color $\Sigma_1$ with colors $\pm 1, \pm 2, \ldots, \pm k_1$ using $2k_1 - m_1$ colors. Let the $M_1$ unused colors be $x_1, x_2, \ldots, x_{M_1}$. 
\item Properly color $\Sigma_2$ with colors $\pm 1, \pm 2,  \ldots, \pm k_2$ using $2k_2 - M_2$ colors. Let the $M_2$ unused colors be $y_1, y_2, \ldots, y_{M_2}$.
\item Make the following Type 1 color replacements on the vertices of $\Sigma_2:$
\begin{center}
\begin{tabular}{| c | c |}
\hline
Old Color & New Color \\ \hline
$-y_1$ & $x_1$ \\ \hline
$-y_2$ & $x_2$ \\ \hline
$\vdots$ & $\vdots$ \\ \hline
$-y_{M_2}$ & $x_{M_2}$ \\
\hline
\end{tabular}
\end{center}

\item Using Type 4 replacements, recolor $\frac{M_1 - M_2 - 1}{2}$ more pairs of color in $\Sigma_2$ using colors $x_{M_2 + 1}, \ldots, x_{M_1 - 1}.$ Note that we only go up to $x_{M_1 - 1}$ since we need an even number of colors for this step.
\item Replace the remaining pairs of colors in $\Sigma_2$ using Type 3 replacements and colors $\pm(k_1 + 1), \ldots, \pm (k_1 + k_2 - \frac{M_1 - M_2 - 1}{2}).$
\end{enumerate}

Call the new coloration $\kappa.$ Since no colors were changed in $\Sigma_1$, $\kappa$ is proper on $\Sigma_1.$ In $\Sigma_2$ all of the color sets were recolored using Type 1, Type 3, and Type 4 replacements, and the original partition of $V(\Sigma_2)$ into color sets was maintained. Thus, the subgraph induced by every color set of $\kappa$ on $\Sigma_2$ is all-negative. Furthermore, if $r$ is a color of $\kappa$ resulting from a Type 1 or Type 4 replacement, then there are no vertices colored $-r$ in $\Sigma_2.$ If $r$ and $-r$ are colors resulting from a Type 3 replacement, then there are no negative edges between the two color sets.
Therefore, $\kappa$ is proper on $\Sigma_2.$ Finally, no color is used on the vertices of both $\Sigma_1$ and $\Sigma_2$. Thus, $\kappa$ is proper on $\Sigma_1 \vee_+ \Sigma_2.$ Furthermore, the color set is
$\{\pm1, \ldots, \pm (k_1 + k_2 - \frac{M_1-M_2-1}{2})\}.$
Therefore, $$\chi(\Sigma_1 \vee_+ \Sigma_2) = 2\left(k_1 + k_2 - \frac{M_1-M_2-1}{2}\right) = \chi_1 + \chi_2 - M_1 - M_2 + 1. \qedhere$$
\end{proof}

\begin{rmk}
In the case where $M_2 = 0,$ one would simply skip step 3 in the recoloration procedure.
\end{rmk}

Because it is similar to the proof in the exceptional case, for the remaining cases we leave the proof that the new coloration is both proper and of correct size as an exercise for the reader.

\section{Non-Exceptional Cases}
\label{Non-exceptions}

\begin{lem}[$M_2 = 0$]
Let $M_2 = 0.$ Assume $\Sigma_1$ and $\Sigma_2$ do not satisfy the conditions of the exception and that $\chi_1 < \chi_1 + \chi_2 - M_1 - M_2$. Then $\chi(\Sigma_1 \vee_+ \Sigma_2) = \chi_1 + \chi_2 - M_1 - M_2.$
\end{lem}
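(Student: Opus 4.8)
The lower bound $\chi(\Sigma_1 \vee_+ \Sigma_2) \ge N := \chi_1 + \chi_2 - M_1 - M_2 = \chi_1 + \chi_2 - M_1$ is immediate from Lemma \ref{lower bound} (recall $M_2 = 0$), so the work is to exhibit a proper coloration of $\Sigma_1 \vee_+ \Sigma_2$ whose color set has size exactly $N$. Following the template of Lemma \ref{exception 2}, the plan is: first fix a minimal coloration of $\Sigma_1$ of deficiency $M_1$, with color set $\{\pm 1, \dots, \pm k_1\}$ (or $\{\pm 1, \dots, \pm k_1, 0\}$ when $\chi_1$ is odd) and deficiency set $\{x_1, \dots, x_{M_1}\}$; second fix a minimal coloration of $\Sigma_2$, which has deficiency $0$ and so uses all $\chi_2$ of its colors; and third recolor only the color classes of $\Sigma_2$, via the Type 1, 3, and 4 replacements, so that $\Sigma_2$ reuses the $M_1$ deficiency colors $x_1, \dots, x_{M_1}$ of $\Sigma_1$ (legal because these colors, being absent from $\Sigma_1$, cause no conflict across the positive join edges) together with enough fresh colors $\pm(k_1+1), \pm(k_1+2), \dots$ to color the remaining classes in $\pm$ pairs. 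Since $M_2 = 0$, reusing all $M_1$ deficiency colors fills out $\{\pm 1, \dots, \pm k_1\}$ completely, and the fresh colors extend this to a color set of the form $\{\pm 1, \dots, \pm k\}$ (or the same with a $0$) whose size is $N$ by the same arithmetic as in Lemma \ref{exception 2}.

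The one genuinely new point is the placement of the color $0$, governed by the parity of $N$. When $N$ is even the target color set $\{\pm 1, \dots, \pm k\}$ contains no $0$; when $N$ is odd it must contain exactly one $0$. Because every join edge is positive, the color $0$ may be carried on at most one of the two sides. I would split into cases by the parities of $\chi_1$, $\chi_2$, and $M_1$. Whenever some $\chi_i$ is odd, the chosen minimal coloration of $\Sigma_i$ already uses $0$; I retain a single such $0$ when $N$ is odd, and otherwise relocate each forced $0$ by a Type 1 replacement---recoloring a $0$-class of $\Sigma_2$ into one of the deficiency colors $x_j$, or, in the degenerate case $M_1 = 0$ with both $\chi_i$ odd, recoloring the two forced zeros into a fresh pair $\pm(k_1+1)$. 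In each such case the reuse-and-extend recoloration yields a color set of the correct form and size $N$, and properness follows as in Lemma \ref{exception 2}.

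The main obstacle is the remaining case, in which $\chi_1$ and $\chi_2$ are both even, $M_1$ is odd, and hence $N$ is odd: here neither minimal coloration furnishes a $0$, yet one is required. This is precisely the configuration bordering the exception, so I would invoke the hypothesis that $\Sigma_1$ and $\Sigma_2$ do not both satisfy it. Since $M_2 = 0$ is even and the chromatic numbers are even, the only unmet condition for the exception must be ``both exceptional''; hence at least one of the two graphs, say $\Sigma_i$, is not exceptional. For a nonzero color, appearing on both endpoints of an edge is the same as appearing on both endpoints of a negative edge, since a positive edge cannot have equal endpoint colors in a proper coloration; because the color set for even $\chi_i$ has no $0$, non-exceptionality of $\Sigma_i$ therefore yields a minimal coloration using $\chi_i - M_i$ colors in which some used (hence nonzero) color $c$ fails to appear on both endpoints of any edge. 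A Type 2 replacement recolors that class to $0$, manufacturing the single needed zero on the side $\Sigma_i$; I then perform the reuse-and-extend recoloration of $\Sigma_2$ as above, freeing $c$ into the pool of colors available for reuse when $i = 1$, and a parity count like that of Lemma \ref{exception 2} confirms the color set is $\{\pm 1, \dots, \pm k, 0\}$ of size $N$. As in the exceptional case, I would leave the routine verification that this coloration is proper to the reader.
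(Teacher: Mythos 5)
Your overall strategy is the paper's: lower bound from Lemma \ref{lower bound}, then a reuse-and-extend recoloration in which $\Sigma_2$'s color classes are sent, by Type 1, 3, 4 replacements, onto the deficiency colors $x_1,\ldots,x_{M_1}$ of $\Sigma_1$ plus fresh pairs $\pm(k_1+1),\ldots$; and your resolution of the case where $\chi_1,\chi_2$ are both even and $M_1$ is odd (manufacture a $0$ via non-exceptionality and a Type 2 replacement, noting that for nonzero colors ``both endpoints of an edge'' is the same as ``both endpoints of a negative edge'') matches the paper's Cases 2.1a/2.1b. The gap is in your zero-relocation step. You offer exactly two recipes: (a) recolor a $0$-class \emph{of $\Sigma_2$} into a deficiency color $x_j$, and (b) when $M_1=0$ and both $\chi_i$ are odd, send the two zeros to a fresh pair $\pm(k_1+1)$. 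Neither covers the configuration $\chi_1$ odd, $\chi_2$ even, $M_1$ odd (the paper's Case 2.2): there the target size $N=\chi_1+\chi_2-M_1$ is even, the only forced $0$ sits on $\Sigma_1$, and $M_1>0$. The same problem recurs when both $\chi_i$ are odd and $M_1$ is even and positive (the paper's Case 1.2): recipe (a) disposes of $\Sigma_2$'s zero but says nothing about $\Sigma_1$'s zero, and moreover consumes an odd number of nonzero classes of $\Sigma_2$, leaving one class without its $\pm$ partner for the Type 3 step.

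The omission is not routine, because the natural extension of recipe (a) to the $\Sigma_1$ side---recolor $\Sigma_1$'s $0$-class with some $x_j$---is unsound. Since a deficiency set never contains both a color and its negative (otherwise dropping the pair would contradict $\chi_1$ being the chromatic number), the color $-x_j$ \emph{is} used on $\Sigma_1$, and a negative edge from the $0$-class to a vertex colored $-x_j$ would make the new coloration improper; nothing excludes such an edge. Recipe (a) is legitimate only on the $\Sigma_2$ side precisely because, after the full recoloration, no vertex of $\Sigma_2$ carries $-x_j$ at all. What these cases require is the paper's pairing device: send $\Sigma_1$'s $0$-class to a brand-new color of absolute value greater than $k_1$ (always safe, since neither it nor its negative occurs on $\Sigma_1$), and compensate on $\Sigma_2$ by using the \emph{negative} of that fresh color there---on $\Sigma_2$'s $0$-class in the both-odd case, or, when $\chi_2$ is even, by breaking one pair $c,-c$ of $\Sigma_2$ with a Type 4 replacement $c\mapsto x_{M_1}$, $-c\mapsto k_1+1$ while $\Sigma_1$'s $0$-class becomes $-(k_1+1)$. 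Without this compensation the final color set either fails to be symmetric or has size $N+1$, so the arithmetic does not close; this is a genuinely missing idea rather than a verification left to the reader.
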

\label{main lemma 0}

\begin{proof}
We need only show a proper coloration using a color set of size $\chi_1 + \chi_2 - M_1 - M_2.$ We have two cases to consider.

{\bf Case 1:} Suppose $M_1$ is even. We have two subcases.

{\bf Case 1.1:} Suppose at least one of $\chi_1$ and $\chi_2$ is even. Then color $\Sigma_1 \vee_+ \Sigma_2$ in the following way. 
\begin{enumerate}
\item Properly color $\Sigma_1$ with $\pm1, \ldots, \pm k_1,$ and $0$ if $\chi_1$ is odd, using $\chi_1 -M_1$ colors. Let the $M_1$ unused colors be $x_1, \ldots, x_{M_1}.$ If $\chi_1$ is odd, 0 must be used.
\item Properly color $\Sigma_2$ with $\pm 1, \ldots, \pm k_2,$ and $0$ if $\chi_1$ is even and $\chi_2$ is odd, using $\chi_2$ colors.
\item Make the following Type 1 and Type 3 replacements on $\Sigma_2.$
\begin{center}
  \begin{tabular}{ | c | c || c | c | }
    \hline
    Old Color & New Color & Old Color & New Color\\ \hline
    1 & $x_1$ & $\frac{M_1}{2} + 1$ & $k_1 +1$ \\ \hline
    -1 & $x_2$ & $-(\frac{M_1}{2} + 1)$ & $-(k_1 +1)$ \\ \hline
    $\vdots$ & $\vdots$ & $\vdots$ & $\vdots$  \\ \hline
    $\frac{M_1}{2}$ & $x_{M_1 -1}$ & $k_2$ & $k_1 + k_2 - \frac{M_1}{2}$ \\ \hline 
    $-\frac{M_1}{2}$ & $x_{M_1}$ & $-k_2$ & $-(k_1 + k_2 - \frac{M_1}{2})$ \\ 
    \hline
  \end{tabular}
\end{center}
\end{enumerate}

{\bf Case 1.2:} Suppose both $\chi_1$ and $\chi_2$ are odd. That is, $\chi_1 = 2k_1 + 1$ and $\chi_2 = 2k_2 + 1$ for some positive integers $k_1$ and $k_2.$ Then create the coloration $\kappa$ in the following way:
\begin{enumerate}
\item Properly color $\Sigma_1$ with $\pm1, \ldots, \pm k_1, 0$ using $\chi_1 -M_1$ colors. Let the $M_1$ unused colors be $x_1, \ldots, x_{M_1}.$ Note that 0 must be used.
\item Properly color $\Sigma_2$ with $\pm 1, \ldots, \pm k_2, 0$ using $\chi_2$ colors.
\item Make the following Type 1 and Type 3 replacements on $\Sigma_2.$ 

\begin{center}
\begin{tabular}{ | c | c || c | c | }
	\hline
	Old Color & New Color & Old Color & New Color \\ \hline
	1 &  $x_1$  & $\frac{M_1}{2} + 1$ &  $k_1 + 1$ \\ \hline
	-1 & $x_2$  & $-(\frac{M_1}{2} + 1)$ & $-(k_1 + 1)$ \\ \hline
	$\vdots$ & $\vdots$ & $\vdots$ & $\vdots$ \\ \hline
	$\frac{M_1}{2}$ & $x_{M_1 -1}$ & $k_2$ & $k_1 + k_2 - \frac{M_1}{2}$ \\ \hline
	$-\frac{M_1}{2}$ & $x_{M_1}$ & $-k_2$ & $-(k_1 + k_2 - \frac{M_1}{2})$ \\
	\hline
\end{tabular}
\end{center}

\item Use Type 1 replacements to recolor the 0-color set in $\Sigma_1$ with $k_1 + k_2 - \frac{M_1}{2} + 1$ and to recolor the 0-color set in $\Sigma_2$ with $-(k_1 + k_2 - \frac{M_1}{2} + 1).$
\end{enumerate}

{\bf Case 2:} Suppose $M_1$ is odd. We have several cases.

{\bf Case 2.1:} Suppose both $\chi_1$ and $\chi_2$ are even. Let $\chi_1 = 2k_1$ and $\chi_2 = 2k_2$ for some positive integers $k_1$ and $k_2$. Then either $\Sigma_1$ or $\Sigma_2$ is not exceptional.

{\bf Case: 2.1a:} Suppose $\Sigma_1$ is not exceptional. Then there exists a coloration using $\chi_1 - M_1$ colors such that there is no edge with both endpoints colored $a,$ for some $a$ in the set of colors used. Call this coloration $\kappa_1.$ Then color $\Sigma_1 \vee_+ \Sigma_2$ in the following way:

\begin{enumerate}
\item Color $\Sigma_1$ with $\kappa_1$. Let the $M_1$ unused colors be $x_1, \ldots, x_{M_1}$. By choice of notation, let $a = 1.$
\item Properly color $\Sigma_2$ with $\pm 1, \ldots, \pm k_2$ using $\chi_2$ colors.
\item Use a Type 2 replacement to replace $a$ with 0 in $\Sigma_1.$
\item Make the following Type 1 replacements on $\Sigma_2.$
\begin{center}
\begin{tabular}{| c | c |}
\hline
Old Color & New Color  \\ \hline
$-1$ & $x_1$  \\ \hline
$2$ & $x_2$  \\ \hline
$\vdots$ & $\vdots$ \\ \hline
$\frac {M_1+1}{2}$ & $x_{M_1-1}$ \\ \hline
$-\frac{M_1+1}{2}$ & $x_{M_1}$ \\
\hline
\end{tabular}
\end{center}
\item Use Type 3 replacements to recolor the remaining $k_2 - \frac{M_1 + 1}{2}$ pairs of colors in $\Sigma_2$ with colors $\pm (k_1+1), \ldots, \pm (k_1 + k_2 -\frac{M_1 + 1}{2}) .$
\end{enumerate}

{\bf Case 2.1b:} 
Suppose $\Sigma_2$ is not exceptional. Then there exists a proper coloration such that no edge has both endpoints colored $a$ for some $a$ in the color set. Call this coloration $\kappa_2.$ Then color $\Sigma_1 \vee_+ \Sigma_2$ in the following way:

\begin{enumerate}
\item Properly color $\Sigma_1$ with $\pm1, \ldots, \pm k_1$ and using $\chi_1-M_1$ colors. Let the $M_1$ unused colors be $x_1, \ldots, x_{M_1}.$ 
\item Color $\Sigma_2$ with $\kappa_2$ using colors $\pm 1, \ldots, \pm k_2$ using $\chi_2$ colors. By choice of notation, let $a = 1.$
\item Make the following Type 1, Type 2, and Type 3 replacements on  $\Sigma_2.$
\begin{center}
\begin{tabular}{| c | c | | c | c |}
\hline
Old Color & New Color & Old Color & New Color \\ \hline
1 & 0 & $\frac{M_1+1}{2}$ & $k_1 + 1$ \\ \hline
-1 & $x_{M_1}$ & $-\frac{M_1+1}{2}$ & $-(k_1 + 1)$ \\ \hline
2 & $x_1$ & $\vdots$ & $\vdots$ \\ \hline
-2 & $x_2$ & $-k_2$  & $-(k_1 + k_2 - \frac{M_1+1}{2})$ \\ \hline
$\vdots$ & $\vdots$ & & \\ \hline
$-\left(\frac{M_1-1}{2}\right)$ & $x_{M_1 - 1}$ & & \\
\hline
\end{tabular}
\end{center}
\end{enumerate}

{\bf Case 2.2:} Suppose $\chi_1 = 2k_1 + 1$ and $\chi_2 = 2k_2$ for some positive integers $k_1$ and $k_2$. Then color $\Sigma_1 \vee_+ \Sigma_2$ in the following way:

\begin{enumerate}
\item Properly color $\Sigma_1$ with $\pm1, \ldots, \pm k_1, 0$ using $\chi_1-M_1$ colors. Let the $M_1$ unused colors be $x_1, \ldots, x_{M_1}.$ Note that $0$ must be used.
\item Properly color $\Sigma_2$ with $\pm 1, \ldots, \pm k_2$ using $\chi_2$ colors.
\item In $\Sigma_1$ use a Type 1 replacement to replace 0 with $-(k_1+1).$
\item Make the following Type 1 and Type 3 replacements on $\Sigma_2.$

\begin{center}
\begin{tabular}{| c | c || c | c |}
\hline
Old Color & New Color & Old Color & New Color \\ \hline
1 & $x_1$ & $-\frac{M_1+1}{2}$ & $k_1 + 1$ \\ \hline
-1 & $x_2$ & $\frac{M_1 + 2}{2}$ & $k_1 + 2$  \\ \hline
$\vdots$ & $\vdots$ & $-\frac{M_1 + 2}{2}$ & $-(k_1 + 2)$ \\ \hline
$-\left(\frac{M_1-1}{2}\right)$ & $x_{M_1 - 1}$ & $\vdots$ & $\vdots$   \\ \hline
$\frac{M_1+1}{2}$ & $x_{M_1}$ & $-k_2$ & $-(k_1 + k_2 - \frac{M_1-1}{2})$ \\
\hline
\end{tabular}
\end{center}
\end{enumerate}

{\bf Case 2.3:} Suppose $\chi_2 = 2k_2 + 1$ for some positive integer $k_2$. Then color $\Sigma_1 \vee_+ \Sigma_2$ in the following way:

\begin{enumerate}
\item Properly color $\Sigma_1$ with $\pm1, \ldots, \pm k_1,$ and 0 if $\chi_1$ is also odd, using $\chi_1-M_1$ colors. Let the $M_1$ unused colors be $x_1, \ldots, x_{M_1}.$ If $\chi_1$ is odd, 0 must be used.
\item Properly color $\Sigma_2$ with $\pm 1, \ldots, \pm k_2, 0$ using $\chi_2$ colors.
\item Make the following Type 1 and Type 3 replacements on $\Sigma_2.$
\begin{center}
\begin{tabular}{| c | c || c | c |}
\hline
Old Color & New Color & Old Color & New Color \\ \hline
1 & $x_1$ & $\frac{M_1+1}{2}$ & $k_1 + 1$ \\ \hline
-1 & $x_2$ & $-\frac{M_1+1}{2}$ & $-(k_1 + 1)$ \\ \hline
$\vdots$ & $\vdots$ & $\vdots$ & $\vdots$ \\ \hline
$-\left(\frac{M_1-1}{2}\right)$ & $x_{M_1 - 1}$ & $-k_2$ & $-(k_1 + k_2 - \frac{M_1-1}{2})$ \\ \hline
$0$ & $x_{M_1}$  & & \\
\hline
\end{tabular}
\end{center}
\end{enumerate}

This concludes the non-exceptional cases where $M_2 = 0.$.
\end{proof}

\begin{lem} [$M_2 > 0$]
Let $M_2 > 0.$ Assume that $M_1 \geq M_2,$ $\Sigma_1$ and $\Sigma_2$ are not exceptional, and $\chi_1 < \chi_1 + \chi_2 - M_1 - M_2$. Then $\chi(\Sigma_1 \vee_+ \Sigma_2) = \chi_1 + \chi_2 - M_1 - M_2.$
\end{lem}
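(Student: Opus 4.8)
The plan is to follow the two-sided template of the preceding lemmas: Lemma \ref{lower bound} already gives $\chi(\Sigma_1 \vee_+ \Sigma_2) \geq \chi_1 + \chi_2 - M_1 - M_2$, so the entire task is to exhibit one proper coloration of the join whose color set has size exactly $\chi_1 + \chi_2 - M_1 - M_2$. The arithmetic fact I would lean on throughout is that the hypothesis $\chi_1 < \chi_1 + \chi_2 - M_1 - M_2$ rearranges to $M_1 + M_2 < \chi_2$, equivalently $M_1 < \chi_2 - M_2$; this says that a maximum-deficiency coloration of $\Sigma_2$ has strictly more color classes than $\Sigma_1$ has deficiency colors, which is precisely the slack needed below.

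First I would fix maximum-deficiency colorations $\kappa_1$ of $\Sigma_1$ and $\kappa_2$ of $\Sigma_2$, using $\chi_1 - M_1$ and $\chi_2 - M_2$ colors. By Proposition \ref{even chromatic} (in the even cases) and the distinct-absolute-values normalization already used in the proof of Lemma \ref{exception 1} (in the odd cases), I may assume both deficiency sets consist of colors of distinct absolute value, and after a relabeling of colors that commutes with negation (which preserves properness) I may place them in standard position. I would then leave $\kappa_1$ essentially fixed and recolor only the classes of $\Sigma_2$ by Type 1, Type 3, and Type 4 replacements so that: (i) all $M_1$ deficiency colors $x_1,\dots,x_{M_1}$ of $\Sigma_1$ are reused on classes of $\Sigma_2$, which fills the palette $\{\pm 1,\dots,\pm k_1\}$ completely; and (ii) the remaining $\chi_2 - M_2 - M_1$ classes receive fresh symmetric pairs $\pm(k_1+1),\pm(k_1+2),\dots$. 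Concretely, the $M_2$ singleton classes of $\Sigma_2$ (those whose opposite color is a deficiency color of $\Sigma_2$) go to deficiency colors by Type 1, while each complete $\pm$ pair of $\Sigma_2$ goes either to two deficiency colors by Type 4 or to a fresh symmetric pair by Type 3. Counting, a full palette $\{\pm 1,\dots,\pm k_1\}$ together with $\tfrac{1}{2}(\chi_2 - M_1 - M_2)$ fresh pairs has size exactly $\chi_1 + \chi_2 - M_1 - M_2$.

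The construction then branches on the parities of $\chi_1,\chi_2,M_1,M_2$, exactly as in the $M_2=0$ lemma. When a $\Sigma_i$ has odd chromatic number its coloration carries a forced $0$; according to whether the target size is even or odd, that $0$ is either retained or recolored to a fresh color whose negative is used on the other side, splitting one fresh pair across the join (the device used in the $M_2=0$ lemma when $\chi_1$ is odd). The genuinely delicate branch is $\chi_1,\chi_2$ both even with $M_1+M_2$ odd, so that the target size is odd and a $0$ must appear. Here the non-exceptional hypothesis does real work: since at least one of $\Sigma_1,\Sigma_2$ is not exceptional, it has a maximum-deficiency coloration in which some used color fails to appear on both ends of an edge, and a Type 2 replacement recolors that class to $0$. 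This frees one additional nonzero color to reuse on $\Sigma_2$ and lowers the size from $\chi_1+\chi_2-M_1-M_2+1$ down to $\chi_1+\chi_2-M_1-M_2$ — exactly the step that is impossible when both graphs are exceptional.

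Finally I would verify properness as in the exceptional-case proof, which is routine: recoloring preserves the partition of $V(\Sigma_2)$ into classes, so each class remains internally all-negative; Type 3 is applied only to genuine $\pm$ pairs of $\Sigma_2$, between which there are no negative edges; every color $r$ introduced by a Type 1 or Type 4 replacement is a deficiency color of $\Sigma_1$, so $-r$ is absent from $\Sigma_2$ (it lies on $\Sigma_1$ or is unused), ruling out negative conflicts inside $\Sigma_2$; and all join edges are positive while the $\Sigma_2$-palette is disjoint from the $\Sigma_1$-palette. The hard part will be the bookkeeping in the middle two steps: reconciling the broken-pair structure forced by $\Sigma_2$'s own deficiency set with the demand that reused and fresh colors assemble into a single symmetric palette of size exactly $\chi_1+\chi_2-M_1-M_2$, and confirming in each parity case that the relevant counts ($M_2$ singletons, roughly $\tfrac{1}{2}(M_1-M_2)$ Type 4 pairs, and the remaining Type 3 pairs) are nonnegative integers — which is where $M_1+M_2<\chi_2$ and the parity hypotheses are invoked repeatedly.
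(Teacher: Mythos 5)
Your proposal is correct and follows essentially the same route as the paper: the lower bound from Lemma \ref{lower bound}, then an explicit recoloration of the classes of $\Sigma_2$ by Type 1 replacements on the $M_2$ singleton classes (colored $-y_j$, sent to $\Sigma_1$'s deficiency colors $x_j$), Type 4 replacements absorbing the remaining deficiency colors in pairs, Type 3 replacements for fresh symmetric pairs, with the parity case analysis on $M_1-M_2$ and $\chi_1,\chi_2$, and the Type 2 replacement (recoloring a repetition-free class to $0$) supplied by non-exceptionality in the both-even, odd-difference case. The paper's proof is precisely this plan instantiated with explicit replacement tables in each parity case, and your properness argument matches the one the paper gives in the exceptional-case lemma and leaves as an exercise here.
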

\label{main lemma non-0}

\begin{proof}
We need only show a coloration using a color set of size $\chi_1 + \chi_2 - M_1 - M_2.$ We have two cases to consider.

{\bf Case 1:} Suppose $M_1$ and $M_2$ are either both even or both odd. This implies $M_1 - M_2$ is even.

{\bf Case 1.1:} Suppose $\chi_1 = 2k_1 + 1$ and $\chi_2 = 2k_2 + 1$ for some positive integers $k_1$ and $k_2.$ Then color $\Sigma_1 \vee_+ \Sigma_2$ in the following way:

\begin{enumerate}
\item Properly color $\Sigma_1$ with $\pm1, \ldots, \pm k_1, 0$ using $\chi_1-M_1$ colors. Let the $M_1$ unused colors be $x_1, \ldots, x_{M_1}.$ Note that 0 must be used.
\item Properly color $\Sigma_2$ with $\pm 1, \ldots, \pm k_2, 0$ using $\chi_2 - M_2$ colors. Let the $M_2$ unused colors be $y_1, \ldots, y_{M_2}.$ Note that $0$ must be used.
\item In $\Sigma_1$ perform a Type 1 replacement and replace 0 with $k_1 + 1.$
\item Make the following Type 1 replacements on $\Sigma_2.$
\begin{center}
\begin{tabular}{| c | c |}
\hline
Old Color & New Color \\ \hline
$0$ & $-(k_1 +1)$ \\ \hline
$-y_1$ & $x_1$ \\ \hline
$-y_2$ & $x_2$ \\ \hline
$\vdots$ & $\vdots$ \\ \hline
$-y_{M_2}$ & $x_{M_2}$ \\
\hline
\end{tabular}
\end{center}
\item Replace $\frac{M_1 - M_2}{2}$ more pairs of colors in $\Sigma_2$ using Type 4 replacements and colors $x_{M_2 + 1}, \ldots, x_{M_1}$.
\item Use Type 3 replacements to recolor the remaining pairs of colors in $\Sigma_2$ using $\pm (k_1 + 2), \ldots, \pm(k_1 + k_2 - M_2 - \frac{M_1 - M_2}{2} + 1).$
\end{enumerate}

{\bf Case 1.2:} Suppose at least one of $\chi_1$ and $\chi_2$ is even. Then color $\Sigma_1 \vee_+ \Sigma_2$ in the following way:

\begin{enumerate}
\item Properly color $\Sigma_1$ with $\pm1, \ldots, \pm k_1,$ and possibly 0 using $\chi_1-M_1$ colors. Let the $M_1$ unused colors be $x_1, \ldots, x_{M_1}.$ If $\chi_1$ is odd, 0 must be used.
\item Properly color $\Sigma_2$ with $\pm 1, \ldots, \pm k_2,$ and possibly 0 using $\chi_2 - M_2$ colors. Let the $M_2$ unused colors be $y_1, \ldots, y_{M_2}.$ If $\chi_2$ is odd, $0$ must be used.
\item Make the following Type 1 replacements on $\Sigma_2.$
\begin{center}
\begin{tabular}{| c | c |}
\hline
Old Color & New Color \\ \hline
$-y_1$ & $x_1$ \\ \hline
$-y_2$ & $x_2$ \\ \hline
$\vdots$ & $\vdots$ \\ \hline
$-y_{M_2}$ & $x_{M_2}$ \\
\hline
\end{tabular}
\end{center}
\item Use Type 4 replacements to recolor $\frac{M_1 - M_2}{2}$ more pairs of colors in $\Sigma_2$ using $x_{M_2 + 1}, \ldots, x_{M_1}$.
\item Replace the remaining pairs of colors in $\Sigma_2$ using Type 3 replacements and colors $\pm (k_1 + 1), \ldots, \pm(k_1 + k_2 - M_2 - \frac{M_1 - M_2}{2}).$
\end{enumerate}

{\bf Case 2:} Now suppose exactly one of $M_1$ and $M_2$ is odd. This implies that $M_1 - M_2$ is odd.

{\bf Case 2.1:} Suppose $\chi_1 = 2k_1$ and $\chi_2 = 2k_2$ for some positive integers $k_1$ and $k_2.$ Then either $\Sigma_1$ or $\Sigma_2$ is not exceptional. 

{\bf Case 2.1a:} Suppose $\Sigma_1$ is not exceptional. Then there exists a coloration using $\chi_1 - M_1$ colors such that there is no edge with both endpoints colored $a,$ for some $a$ in the set of colors used. Call this coloration $\kappa_1.$ Then color $\Sigma_1 \vee_+ \Sigma_2$ in the following way:

\begin{enumerate}
\item Color $\Sigma_1$ using $\kappa_1.$ Let the $M_1$ unused colors be $x_1, \ldots, x_{M_1}.$ By choice of notation, let $a = 1.$
\item Properly color $\Sigma_2$ with $\pm 1, \ldots, \pm k_2$ using $\chi_2 - M_2$ colors. Let the $M_2$ unused colors be $y_1, \ldots, y_{M_2}.$ 
\item In $\Sigma_1$ perform a Type 2 replacement to replace $a$ with 0.
\item Make the following Type 1 replacements on $\Sigma_2.$
\begin{center}
\begin{tabular}{| c | c |}
\hline
Old Color & New Color \\ \hline
$-y_1$ & $x_1$ \\ \hline
$-y_2$ & $x_2$ \\ \hline
$\vdots$ & $\vdots$ \\ \hline
$-y_{M_2 -1}$ & $x_{M_2 -1}$ \\ \hline
$-y_{M_2}$ & $x_{M_2}$ \\
\hline
\end{tabular}
\end{center}
\item Use Type 4 replacements to recolor $\frac{M_1 - M_2 + 1}{2}$ more pairs of colors in $\Sigma_2$ using $x_{M_2+1}, \ldots, x_{M_1}$ and $a.$
\item Replace the remaining pairs of colors in $\Sigma_2$ using Type 3 replacements and colors $\pm (k_1 +1), \ldots, \pm(k_1 + k_2 - M_2 - \frac{M_1 - M_2 +1}{2}).$
\end{enumerate}

{\bf Case 2.1b:} Suppose $\Sigma_2$ is not exceptional. Then there exists a coloration using $\chi_2 - M_2$ colors and such that no edge has both endpoints colored $a,$ for some $a$ in the set of used colors. Call this coloration $\kappa_2.$ By Lemma \ref{even chromatic} we know that $-a$ cannot be in the deficiency set of $\kappa.$ Then color $\Sigma_1 \vee_+ \Sigma_2$ in the following way:

\begin{enumerate}
\item Properly color $\Sigma_1$ with $\pm 1, \ldots, \pm k_1$ using $\chi_1 - M_1$ colors. Let the $M_1$ unused colors be $x_1, \ldots, x_{M_1}.$ 
\item Color $\Sigma_2$ with $\kappa_2$ and colors $\pm 1, \ldots, \pm k_2$ Let the $M_2$ unused colors be $y_1, \ldots, y_{M_2}.$
\item In $\Sigma_2$ perform a Type 2 replacement to replace $a$ with 0.
\item Make the following Type 1 replacements on $\Sigma_2.$
\begin{center}
\begin{tabular}{| c | c |}
\hline
Old Color & New Color \\ \hline
$-y_1$ & $x_1$ \\ \hline
$-y_2$ & $x_2$ \\ \hline
$\vdots$ & $\vdots$ \\ \hline
$-y_{M_2 -1}$ & $x_{M_2-1}$ \\ \hline
$-y_{M_2}$ & $x_{M_2}$ \\ \hline
$-a$ & $x_{M_1}$ \\
\hline
\end{tabular}
\end{center}
\item Replace $\frac{M_1 - M_2 - 1}{2}$ more pairs of colors in $\Sigma_2$ using Type 4 replacements and colors $x_{M_2+1}, \ldots, x_{M_1 - 1}.$
\item Use Type 3 replacements to replace the remaining pairs of colors in $\Sigma_2$ using $\pm (k_1 +1), \ldots, \pm(k_1 + k_2 - M_2 - \frac{M_1 - M_2 -1}{2} - 1).$
\end{enumerate}

{\bf Case 2.2:} Suppose $\chi_1 = 2k_1 + 1$ and $\chi_2 = 2k_2$ for some positive integers $k_1$ and $k_2$. Then color $\Sigma_1 \vee_+ \Sigma_2$ in the following way:

\begin{enumerate}
\item Properly color $\Sigma_1$ with $\pm1, \ldots, \pm k_1,$ and 0 using $\chi_1-M_1$ colors. Let the $M_1$ unused colors be $x_1, \ldots, x_{M_1}.$ Note that 0 must be used.
\item Properly color $\Sigma_2$ with $\pm 1, \ldots, \pm k_2$ using $\chi_2 - M_2$ colors. Let the $M_2$ unused colors be $y_1, \ldots, y_{M_2}.$
\item Make the following Type 1 replacements on $\Sigma_2.$
\begin{center}
\begin{tabular}{| c | c |}
\hline
Old Color & New Color \\ \hline
$-y_1$ & $x_1$ \\ \hline
$-y_2$ & $x_2$ \\ \hline
$\vdots$ & $\vdots$ \\ \hline
$-y_{M_2}$ & $x_{M_2}$ \\
\hline
\end{tabular}
\end{center}
\item Use Type 4 replacements to recolor $\frac{M_1 - M_2 - 1}{2}$ more pairs of colors in $\Sigma_2$ using $x_{M_2 + 1}, \ldots, x_{M_1 - 1}$.
\item Of the remaining pairs of colors in $\Sigma_2$, let $c,-c$ be one. Use a Type 4 replacement to replace $c$ with $x_{M_1}$ and $-c$ with $k_1 + 1.$
\item In $\Sigma_1$ use a Type 1 replacement to replace 0 with $-(k_1 +1).$
\item Replace the remaining pairs of colors in $\Sigma_2$ using Type 3 replacements and colors $\pm (k_1 + 2), \ldots, \pm(k_1 + k_2 - M_2 - \frac{M_1 - M_2-1}{2}).$
\end{enumerate}

{\bf Case 2.3:} Suppose $\chi_2 = 2k_2 + 1$ for some positive integer $k_2.$ Then color $\Sigma_1 \vee_+ \Sigma_2$ in the following way:

\begin{enumerate}
\item Properly color $\Sigma_1$ with $\pm1, \ldots, \pm k_1,$ and possibly 0 using $\chi_1-M_1$ colors. Let the $M_1$ unused colors be $x_1, \ldots, x_{M_1}.$ If $\chi_1$ is odd, 0 must be used.
\item Properly color $\Sigma_2$ with $\pm 1, \ldots, \pm k_2,$ and 0 using $\chi_2 - M_2$ colors. Let the $M_2$ unused colors be $y_1, \ldots, y_{M_2}.$ Note that $0$ must be used.
\item Make the following Type 1 replacements on $\Sigma_2.$
\begin{center}
\begin{tabular}{| c | c |}
\hline
Old Color & New Color \\ \hline
$0$ & $x_{M_1}$ \\ \hline
$-y_1$ & $x_1$ \\ \hline
$-y_2$ & $ x_2$ \\ \hline
$\vdots$ & $\vdots$ \\ \hline
$-y_{M_2}$ & $x_{M_2}$ \\
\hline
\end{tabular}
\end{center}
\item Replace $\frac{M_1 - M_2 - 1}{2}$ more pairs of colors in $\Sigma_2$ using Type 4 replacements and colors $x_{M_2 + 1}, \ldots, x_{M_1 -1}$.
\item Use Type 3 replacements to recolor the remaining pairs of colors in $\Sigma_2$ using $\pm (k_1 + 1), \ldots, \pm(k_1 + k_2 - M_2 - \frac{M_1 - M_2 - 1}{2}).$
\end{enumerate}

This concludes the non-exceptional cases where $M_2 > 0$. 
\end{proof}

Lemmas \ref{exception 1}, \ref{exception 2}, \ref{main lemma 0}, and \ref{main lemma non-0} together prove Theorem \ref{main thm}. \hfill\qedsymbol

\end{document}